\numberwithin{equation}{section}
\begin{document}

\fancyhf{}

\fancyhead[OR]{\thepage}

\renewcommand{\headrulewidth}{0pt}
\renewcommand{\thefootnote}{\fnsymbol {footnote}}

\theoremstyle{plain} 
\newtheorem{thm}{\indent\sc Theorem}[section] 
\newtheorem{lem}[thm]{\indent\sc Lemma}
\newtheorem{cor}[thm]{\indent\sc Corollary}
\newtheorem{prop}[thm]{\indent\sc Proposition}
\newtheorem{claim}[thm]{\indent\sc Claim}
\theoremstyle{definition} 
\newtheorem{dfn}[thm]{\indent\sc Definition}
\newtheorem{rem}[thm]{\indent\sc Remark}
\newtheorem{ex}[thm]{\indent\sc Example}
\newtheorem{notation}[thm]{\indent\sc Notation}
\newtheorem{assertion}[thm]{\indent\sc Assertion}
%
%
\numberwithin{equation}{section}
\renewcommand{\proofname}{\indent\sc Proof.} 
\def\C{\mathbb{C}}
\def\R{\mathbb{R}}
\def\Rn{{\mathbb{R}^n}}
\def\M{\mathbb{M}}
\def\N{\mathbb{N}}
\def\Q{{\mathbb{Q}}}
\def\Z{\mathbb{Z}}
\def\F{\mathcal{F}}
\def\L{\mathcal{L}}
\def\S{\mathcal{S}}
\def\supp{\operatorname{supp}}
\def\essi{\operatornamewithlimits{ess\,inf}}
\def\esss{\operatornamewithlimits{ess\,sup}}
\def\dlim{\displaystyle\lim}

\fancyhf{}

\fancyhead[EC]{W. LI, H. Wang, D. Yan}

\fancyhead[EL]{\thepage}

\fancyhead[OC]{Pointwise Convergence for nonelliptic Schr\"{o}dinger Means}

\fancyhead[OR]{\thepage}

\renewcommand{\headrulewidth}{0pt}
\renewcommand{\thefootnote}{\fnsymbol {footnote}}

\title{\textbf{Sharp convergence for sequences of nonelliptic Schr\"{o}dinger means}
\footnotetext {This work is supported by the National Natural Science Foundation of China (No.11871452); Natural Natural Science Foundation of China (No.11701452); China Postdoctoral Science Foundation (No.2017M613193);  Natural Science Basic Research Plan in Shaanxi Province of China (No.2017JQ1009).}
\footnotetext {{}{2000 \emph{Mathematics Subject
 Classification}: 42B20, 42B25, 35S10.}}
\footnotetext {{}\emph{Key words and phrases}: nonelliptic Schr\"{o}dinger mean, Pointwise convergence. } } \setcounter{footnote}{0}
\author{
Wenjuan Li, Huiju Wang, Dunyan Yan}

\date{}
\maketitle

\begin{abstract}
We consider pointwise convergence of nonelliptic  Schr\"{o}dinger means $e^{it_{n}\square}f(x)$ for $f \in H^{s}(\mathbb{R}^{2})$ and decreasing sequences $\{t_{n}\}_{n=1}^{\infty}$ converging to zero, where
\[{e^{it_{n}\square }}f\left( x \right): = \int_{{\mathbb{R}^2}} {{e^{i\left( {x \cdot \xi  + t_{n}{{ \xi_{1}\xi_{2} }}} \right)}}\widehat{f}} \left( \xi  \right)d\xi .\]
We prove that when $0<s < \frac{1}{2}$,
\[\mathop {\lim }\limits_{n \to \infty} {e^{it_{n}\square }}f\left( x \right) = f(x) \hspace{0.2cm} a.e.\hspace{0.2cm} x\in \mathbb{R}^2\]
holds for all $f \in {H^s}\left( {{\mathbb{R}^2}} \right)$ if and only if $\{t_{n}\}_{n=1}^{\infty} \in \ell^{r(s), \infty}(\mathbb{N})$, $r(s)=\frac{s}{1-s}$. Moreover, our result remains valid in general dimensions.
\end{abstract}

\section{Introduction}
Consider the generalized Schr\"{o}dinger equation
\begin{equation}\label{Eq1}
\begin{cases}
 \partial_{t}u(x,t)-iP(D)u(x,t) =0 \:\:\:\ x \in \mathbb{R}^{N}, t \in \mathbb{R}^{+},\\
u(x,0)=f \\
\end{cases}
\end{equation}
where $D=\frac{1}{i}(\frac{\partial}{\partial x_{1}},\frac{\partial}{\partial x_{2}},...,\frac{\partial}{\partial x_{N}})$, $P(\xi)$ is a real continuous function defined on $\mathbb{R}^{N}$, $P(D)$ is defined via its real symbol
\[P(D)f(x)= \int_{\mathbb{R}^{N}}{e^{ix \cdot \xi}P(\xi)\hat{f}(\xi)d\xi}.\]
 The solution of (\ref{Eq1}) can be formally written as
\begin{equation}
e^{itP(D)}f(x):= \int_{\mathbb{R}^{N}}{e^{ix \cdot \xi +itP(\xi)} \hat{f} (\xi)d\xi },
\end{equation}
where $\hat{f}(\xi)$ denotes the Fourier transform of $f$. The related pointwise convergence problem is to determine the optimal $s$ for which
\begin{equation}
\mathop{lim}_{t \rightarrow 0^{+}} e^{itP(D)}f(x) = f(x)
\end{equation}
almost everywhere whenever $f \in H^{s}(\mathbb{R}^{N})$.

In the elliptic case: $P(\xi)=|\xi|^2$, the pointwise convergence problem was first considered by Carleson \cite{C} and he showed the convergence for $s\geq1/4$ when $N=1$.  Dahlberg-Kenig \cite{DK} showed that the convergence does not hold for $s<1/4$ in any dimension, which implies sharpness for the condition given by Carleson in one-dimensional case. In higher dimension $N\geq2$, Sj\"{o}lin \cite{S} and Vega \cite{V} independently obtained the convergence for $s>1/2$. In 2016, Bourgain \cite{B} gave a counterexample showing that it is false if  $s<\frac{N}{2(N+1)}$. Recently,  Du-Guth-Li \cite{DGL} for $N=2$ and Du-Zhang \cite{DZ} for higher dimensions $N\geq3$ obtained the sharp result for convergence up to the endpoint. Moreover, recent progress for the fractional Schr\"{o}dinger operators when $P(\xi)=|\xi|^\alpha$, $\alpha >1$ can be found in \cite{CK}.

Another interesting case is  the nonelliptic case: $P(\xi)=\xi_1^2-\xi_2^2\pm \xi_3^2\pm \cdots \pm \xi_N^2$.  for physical application of the nonelliptic Schr\"{o}dinger equation, see for example \cite{SPS}. Rogers-Vargas-Vega \cite{RVV} showed that the pointwise convergence of the solution to the nonelliptic Schr\"{o}dinger equation, $i\partial_tu+(\partial_x^2-\partial_y^2)u=0$, was proved when $f\in H^s(\mathbb{R}^2)$, if and only if $s\geq 1/2$. Thus the pointwise behavior is worse than that in the elliptic case. In higher dimensions, they also established similar results except the endpoint.

One of the natural generalizations of the pointwise convergence problem is to ask a.e. convergence of the Schr\"{o}dinger means where the limit is taken over decreasing sequences  $\{t_{n}\}_{n=1}^{\infty}$ converging to zero. That is to investigate relationship between optimal $s$  and properties of $\{t_{n}\}_{n=1}^{\infty}$  such that for each  function $f \in H^{s}(\mathbb{R}^{N})$,
 \begin{equation}\label{Eq1.3}
 \lim_{n \rightarrow \infty}e^{it_{n}P(D)}f(x) = f(x) \hspace{0.2cm} a.e.\hspace{0.2cm} x\in \mathbb{R}^N.
 \end{equation}
This problem was first considered by Sj\"{o}lin \cite{S1} in general dimensions and later improved by Sj\"{o}lin-Str\"{o}mberg \cite{SS1} for $P(\xi) = |\xi|^{\alpha}$, $\alpha >1$. Dimou-Seeger \cite{DS} obtained a sharp characterization of this problem in the one-dimensional case for $P(\xi) = |\xi|^{\alpha}$, $\alpha >0$. More recently, Li-Wang-Yan \cite{LW} improved the previous results of Sj\"{o}lin \cite{S1} and Sj\"{o}lin-Str\"{o}mberg \cite{SS1} for  $P(\xi) = |\xi|^{2}$ in $\mathbb{R}^{2}$ through the bilinear method.

In this paper, we concentrate ourselves on the nonelliptic case and seek what happens if $0<s<\frac{1}{2}$. More concretely, we obtain a sufficient and necessary condition for $\{t_{n}\}_{n=1}^{\infty}$ to ensure a.e. convergence of nonelliptic Schr\"{o}dinger means. For convenience, we first set $N=2$. By changing of variables, the nonelliptic Schr\"{o}dinger operator can be written as
\begin{equation}
e^{it\square}f(x):= \int_{\mathbb{R}^{2}}{e^{ix \cdot \xi +it\xi_{1}\xi_{2}} \hat{f} (\xi)d\xi }.
\end{equation}
In what follows, we always assume that the decreasing sequence $\{t_{n}\}_{n=1}^{\infty}$  converges to zero and $\{t_{n}\}_{n=1}^{\infty} \subset (0,1)$.
In order to characterize the convergence of  $\{t_{n}\}_{n=1}^{\infty}$,  we introduce the Lorentz space ${\ell}^{r,\infty}(\mathbb{N})$, $r>0$. The sequence $\{t_{n}\}_{n=1}^{\infty} \in {\ell}^{r,\infty}(\mathbb{N})$ if and only  if
\begin{equation}
\mathop{sup}_{b>0}b^{r}\sharp\biggl\{n:t_{n}>b\biggl\} < \infty.
\end{equation}
Our main results are as follows.

\begin{thm}\label{theorem1.2}
Let $0< s <\frac{1}{2}$,  $t_{n} -t_{n+1}$  be decreasing.
Then
\[\mathop {\lim }\limits_{n \to \infty} {e^{it_{n}\square }}f\left( x \right) = f(x) \hspace{0.2cm} a.e.\hspace{0.2cm} x\in \mathbb{R}^2\]
holds for all $f \in H^{s}(\mathbb{R}^{2})$ if and only if $\{t_{n}\}_{n=1}^{\infty} \in {\ell}^{r(s),\infty}(\mathbb{N})$, $r(s)= \frac{s}{1-s}$.
\end{thm}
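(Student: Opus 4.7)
The theorem is an equivalence and the proof naturally splits into sufficiency and necessity.

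\textbf{Sufficiency.} Assuming $\{t_{n}\}\in \ell^{r(s),\infty}(\mathbb{N})$, my goal is to establish a local maximal inequality of the form
\[
\Bigl\| \sup_{n}|e^{it_{n}\square}f|\Bigr\|_{L^{q}(B(0,1))} \lesssim \|f\|_{H^{s}(\mathbb{R}^{2})}
\]
for some exponent $q$; standard density arguments with Schwartz functions then upgrade this to a.e. convergence. I would attack this by two nested dyadic decompositions. Write $f=\sum_{k\ge 0}P_{k}f$ with $\widehat{P_{k}f}$ supported in $\{|\xi|\sim 2^{k}\}$, so that $\|f\|_{H^{s}}^{2}\sim \sum_{k}2^{2ks}\|P_{k}f\|_{2}^{2}$. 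Partition the time sequence into dyadic blocks $A_{j}:=\{n:t_{n}\in(2^{-j-1},2^{-j}]\}$; the hypothesis $\{t_{n}\}\in \ell^{r(s),\infty}$ is equivalent to the cardinality bound $\#A_{j}\lesssim 2^{jr(s)}$, uniformly in $j$.

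For each pair $(k,j)$ I would distinguish regimes according to the phase size $t_{n}2^{2k}$. When $j>2k$ (so $t_{n}\ll 2^{-2k}$), a first-order Taylor expansion of the phase in $\widehat{P_{k}f}$ produces the pointwise bound $|e^{it_{n}\square}P_{k}f - P_{k}f|\lesssim t_{n}\,2^{2k}\,M(P_{k}f)$, where $M$ is the Hardy-Littlewood maximal function, and these errors are summable in both $j$ and $k$. When $j\le 2k$, which is the main regime, I would combine the bilinear restriction estimate for the hyperbolic paraboloid $\tau=\xi_{1}\xi_{2}$ underlying the Rogers-Vargas-Vega endpoint $s=1/2$ with the cardinality bound $\#A_{j}\lesssim 2^{jr(s)}$: rather than a trivial $\sqrt{\#A_{j}}$ loss from $\ell^{2}$-summing the individual $L^{2}$-isometry bounds, one uses the finiteness of $A_{j}$ against the RVV maximal estimate over $\sup_{t\in(0,2^{-j}]}$ to gain a smaller power. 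Summing the resulting geometric series first in $j$, then in $k$ against the Sobolev weight $2^{2ks}$, the exponent $r(s)=s/(1-s)$ should emerge from the precise balance of these two scales.

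\textbf{Necessity.} If $\{t_{n}\}\notin \ell^{r(s),\infty}$, then for every $M$ there exists $b_{M}$ with $b_{M}^{r(s)}\#\{n:t_{n}>b_{M}\}\ge M$. I would construct a counterexample concentrated at frequency scale $2^{k_{M}}\sim b_{M}^{-1/2}$: choose a bump $g_{M}$ with $\widehat{g_{M}}$ supported in a shell $\{|\xi|\sim 2^{k_{M}}\}$ and normalized so $\|g_{M}\|_{H^{s}}\sim 1$. For $t_{n}>b_{M}$ and $|\xi|\sim 2^{k_{M}}$ the phase $e^{it_{n}\xi_{1}\xi_{2}}$ varies over angles of order one, so the many evaluations $e^{it_{n}\square}g_{M}(\cdot)$ spread out in a quantifiable manner; a Khintchine-type randomization of signs, or an explicit translation/modulation trick, isolates a point $x_{M}$ where $|e^{it_{n}\square}g_{M}(x_{M})|$ is large for many indices $n$. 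A Borel-Cantelli-type superposition of suitably translated copies of the $g_{M}$'s then yields $f\in H^{s}$ for which $e^{it_{n}\square}f$ fails to converge on a set of positive measure.

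\textbf{Main obstacle.} The hard part is the sufficiency in the regime $j\le 2k$. One needs a maximal bound on each $(k,j)$-piece that is strictly sharper than the trivial $\sqrt{\#A_{j}}$ loss and that interlocks with the sparseness exponent $r(s)$ so that the final double sum converges exactly at the stated threshold. Deploying the nonelliptic bilinear restriction machinery at this level of precision, while ensuring that the power of $2^{k}$ on each piece matches the Sobolev scaling, is the delicate matching; any cruder bound on the bilinear side would yield a larger exponent than $s/(1-s)$ and miss the sharp characterization.
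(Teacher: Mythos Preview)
Your sufficiency sketch misidentifies the key analytic input. In the large-phase regime you invoke ``the bilinear restriction estimate for the hyperbolic paraboloid'' and the RVV maximal bound, but neither tool by itself, nor any unspecified combination of the two, closes the sum at the exponent $r(s)=s/(1-s)$: the RVV estimate $\|\sup_{t\in(0,1)}|e^{it\square}P_kf|\|_{L^2}\lesssim 2^{k/2}\|P_kf\|_2$ only recovers $s\ge 1/2$, and the trivial $\sqrt{\#A_j}$ bound gives $2^{kr(s)}$ in place of $2^{ks}$, which is too large. What the paper actually uses is much more elementary: the local-in-time bound
\[
\Bigl\|\sup_{t\in I}|e^{it\square}P_kf|\Bigr\|_{L^2(B(0,1))}\lesssim 2^{k}|I|^{1/2}\|P_kf\|_{L^2},\qquad 2^{-2k}\le |I|\le 2^{-k},
\]
which follows in two lines from the fundamental theorem of calculus in $t$ plus Plancherel (Theorem~1.2 in the paper). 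This is the missing idea. The paper then splits into \emph{three} regimes (not two): one where the trivial $\ell^2$ sum plus $\#A_l\lesssim 2^{2rl/(r+1)}$ suffices, one where the display above is applied with $|I|\sim 2^{-2l/(1+r)}$, and the small-phase regime you describe. The time blocks are accordingly taken at scale $2^{-2l/(1+r)}$ rather than $2^{-l}$, which is what makes all three geometric series converge simultaneously. No bilinear restriction enters anywhere.

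Your necessity outline also has gaps. First, the hypothesis that $t_n-t_{n+1}$ is decreasing is never used in your sketch, yet it is essential: it is what converts the counting bound $\#\{n:b_j<t_n\le 2b_j\}\ge M_jb_j^{-r(s)}$ into a \emph{spacing} bound $t_n-t_{n+1}\lesssim M_j^{-1}b_j^{1+r(s)}$ for $t_n\le b_j$, and this spacing is what allows one to freeze the phase. Second, your frequency scale $2^{k_M}\sim b_M^{-1/2}$ is too small; the paper takes $\lambda_j\sim M_j^{1/2}b_j^{-(r(s)+1)/2}$, which depends on $M_j$ as well. Third, no randomization or Borel--Cantelli superposition is needed: by Nikishin's theorem the a.e.\ convergence already implies a weak-type $(2,2)$ bound for the maximal operator, and a single deterministic function $\widehat{f_j}=\lambda_j^{-1}\chi_{[0,\lambda_j]\times[-\lambda_j-1,-\lambda_j]}$ (modeled on the RVV counterexample) violates that bound on a set $U_j\subset B(0,1)$ of measure $\sim \lambda_j b_j$, yielding the contradiction $1\lesssim M_j^{s-1}$.
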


Theorem \ref{theorem1.2} provides  a sharp  condition of $\{t_{n}\}_{n=1}^{\infty}$ for convergence of nonelliptic Schr\"{o}dinger means to hold. The sufficient and necessary conditions in Theorem \ref{theorem1.2} will be proved in Section 2 and Section 3, respectively.  The proof of the sufficient condition depends heavily on the following Theorem \ref{theorem1.4}.

\begin{thm}\label{theorem1.4}
If supp $\hat{f} \subset  \{\xi : |\xi| \sim \lambda\}$, $\lambda \ge 1$,  then for any small   interval $I$ with
\[\lambda^{-2} \le |I| \le \lambda^{-1},\]
we have
 \begin{equation}\label{Eq1.8+}
\biggl\|\mathop{sup}_{t \in I}|e^{it\square}f(x)|\biggl\|_{L^2(B(0,1))} \leq C\lambda |I|^{\frac{1}{2}}\|f\|_{L^{2}},
\end{equation}
where the constant $C$ does not depend on $f$.
\end{thm}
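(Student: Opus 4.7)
The plan is to apply the standard one-dimensional Sobolev embedding $H^1(I)\hookrightarrow L^\infty(I)$ in the time variable $t$ and then convert the pointwise estimate into an $L^2_x$ bound via Plancherel's theorem and the frequency localization of $f$.

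First, I would fix $x$ and set $g_x(t):=e^{it\square}f(x)$. Differentiating under the integral sign gives
$$g_x'(t)=i\int_{\mathbb{R}^2}e^{i(x\cdot\xi+t\xi_1\xi_2)}\xi_1\xi_2\,\hat f(\xi)\,d\xi=ie^{it\square}(\square f)(x),$$
where $\square f$ denotes the operator with Fourier symbol $\xi_1\xi_2$. The classical one-variable estimate (write $|g_x(t)|^2=|g_x(s)|^2+2\mathrm{Re}\int_s^t\bar g_xg_x'\,du$ and average in $s\in I$) then yields
$$\sup_{t\in I}|g_x(t)|^2\le |I|^{-1}\int_I|g_x(t)|^2\,dt+2\Bigl(\int_I|g_x(t)|^2\,dt\Bigr)^{1/2}\Bigl(\int_I|g_x'(t)|^2\,dt\Bigr)^{1/2}.$$

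Next, I would integrate this inequality over $x\in B(0,1)$, bound the left-hand side by extending the spatial integration to $\mathbb{R}^2$, and apply Fubini together with Cauchy--Schwarz in $x$. Since $e^{it\square}$ is unitary on $L^2(\mathbb{R}^2)$ by Plancherel, one has $\int_I\|e^{it\square}f\|_{L^2_x}^2\,dt=|I|\,\|f\|_2^2$, and similarly with $f$ replaced by $\square f$. This leads to the energy bound
$$\Bigl\|\sup_{t\in I}|e^{it\square}f|\Bigr\|_{L^2(B(0,1))}^2\le \|f\|_2^2+2|I|\,\|f\|_2\,\|\square f\|_2.$$

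Finally, the frequency support assumption $\mathrm{supp}\,\hat f\subset\{|\xi|\sim\lambda\}$ combined with $|\xi_1\xi_2|\le|\xi|^2/2\lesssim\lambda^2$ and Plancherel yields $\|\square f\|_2\le C\lambda^2\|f\|_2$. The hypothesis $|I|\ge\lambda^{-2}$ ensures $|I|\lambda^2\ge 1$, so the derivative term dominates the constant term, and taking square roots gives the claimed bound $C\lambda|I|^{1/2}\|f\|_2$. The argument is a routine energy--Sobolev estimate and presents no genuine obstacle; the only care is to track the additive $\|f\|_2^2$ term and confirm that the lower constraint on $|I|$ absorbs it. The upper constraint $|I|\le\lambda^{-1}$ plays no role in the proof itself and presumably delimits the range in which this estimate is invoked during the sufficiency proof of Theorem~\ref{theorem1.2}.
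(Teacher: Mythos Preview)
Your argument is correct and follows essentially the same route as the paper: apply one-dimensional Sobolev embedding in $t$, then Plancherel in $x$ together with the unitarity of $e^{it\square}$, and use the frequency localization $|\xi_1\xi_2|\lesssim\lambda^2$ plus the lower bound $|I|\ge\lambda^{-2}$ to absorb the $\|f\|_2^2$ term. The paper's write-up is terser but identical in substance, and your observation that the upper bound $|I|\le\lambda^{-1}$ is not used in the proof itself is also correct.
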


The proof of Theorem \ref{theorem1.4} is not hard. Indeed, it follows from Sobolev's embedding and Plancherel theorem that
 \begin{align}
 &\biggl\|\mathop{sup}_{t \in I}|e^{it\square}f| \biggl\|_{L^2(B(0,1))} \nonumber\\
 &\le \|f\|_{L^2} +  \biggl\|  \int_{{\mathbb{R}^2}} {{e^{i(x \cdot \xi + t\xi_{1}\xi_{2} )}}\widehat{f}} \left( \xi  \right)d\xi  \biggl\|^{1/2}_{L^2(B(0,1)\times I)} \nonumber\\
 & \:\  \times  \biggl\|  \int_{{\mathbb{R}^2}} {{e^{i( x\cdot \xi + t\xi_{1}\xi_{2} )}} \xi_{1}\xi_{2}\widehat{f}} \left( \xi  \right)d\xi  \biggl\|^{1/2}_{L^2(B(0,1)\times I)} \nonumber\\
 &\le \|f\|_{L^2} + |I|^{\frac{1}{2}} \|\widehat{f}\|_{L^{2}}^{1/2} \| \xi_{1}\xi_{2}\widehat{f}(\xi)\|_{L^{2}}^{1/2} \nonumber\\
 &\le \|f\|_{L^2} + \lambda|I|^{\frac{1}{2}} \|f\|_{L^{2}} \nonumber\\
 &\leq \lambda|I|^{\frac{1}{2}} \|f\|_{L^{2}}. \nonumber
 \end{align}
Then we arrive at inequality (\ref{Eq1.8+}).

Rogers-Vargas-Vega \cite{RVV} applied the stationary phase method to show the sharp estimate
 \begin{equation}\label{Eq1.8}
\biggl\|\mathop{sup}_{t \in (0,1)}|e^{it\square}f(x)|\biggl\|_{L^2(B(0,1))} \leq C\lambda^{\frac{1}{2} }\|f\|_{L^{2}(\mathbb{R}^2)},\hspace{0.2cm}\textmd{supp} \hat{f} \subset  \{\xi : |\xi| \sim \lambda\}.
\end{equation}
This implies that if $|I|=\lambda^{-1}$, then
 \begin{equation}\label{Eq1.9}
\biggl\|\mathop{sup}_{t \in I}|e^{it\square}f(x)|\biggl\|_{L^2(B(0,1))} \leq C\lambda^{\frac{1}{2} }\|f\|_{L^{2}(\mathbb{R}^2)},
\end{equation}
which coincides with Theorem \ref{theorem1.4} when $|I|=\lambda^{-1}$.
Due to the localizing lemma in Remark 3.1 of Lee-Rogers \cite{LR}, inequality (\ref{Eq1.9}) yields inequality (\ref{Eq1.8}). Therefore, inequality (\ref{Eq1.9}) is also sharp.

Moreover, by the same method as  we applied to prove Theorem \ref{theorem1.2}, we can get the corresponding result in general dimensions $N\geq 2$.

 \begin{thm}\label{theorem1.1}
Let $0< s <\frac{1}{2}$,  $t_{n} -t_{n+1}$  be decreasing, $P(\xi)=\xi_1^2-\xi_2^2\pm \xi_3^2\pm \cdots \pm \xi_N^2$.
Then
\[\mathop {\lim }\limits_{n \to \infty} {e^{it_{n}P(D) }}f\left( x \right) = f(x) \hspace{0.2cm} a.e.\hspace{0.2cm} x\in \mathbb{R}^N\]
holds for all $f \in H^{s}(\mathbb{R}^{N})$ if and only if $\{t_{n}\}_{n=1}^{\infty} \in {\ell}^{r(s),\infty}(\mathbb{N})$, $r(s)= \frac{s}{1-s}$.
\end{thm}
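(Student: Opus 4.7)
The plan is to mirror, in arbitrary dimension $N \geq 2$, the two directions of the proof of Theorem \ref{theorem1.2}. The crucial feature of the nonelliptic symbol $P(\xi) = \xi_1^2 - \xi_2^2 \pm \xi_3^2 \pm \cdots \pm \xi_N^2$ is that $|P(\xi)| \leq |\xi|^2$, which is exactly what drove the two-dimensional argument. First I would establish the $N$-dimensional analogue of Theorem \ref{theorem1.4}: if $\supp \hat f \subset \{\xi : |\xi| \sim \lambda\}$ with $\lambda \geq 1$ and $\lambda^{-2} \leq |I| \leq \lambda^{-1}$, then
\[
\biggl\| \sup_{t \in I} |e^{it P(D)} f(x)|\biggr\|_{L^2(B(0,1))} \leq C \lambda |I|^{1/2} \|f\|_{L^2}.
\]
The proof is literally the computation displayed just after Theorem \ref{theorem1.4}: apply Sobolev's embedding in the $t$-variable to pass from $\sup_{t \in I}$ to an $L^2(B(0,1) \times I)$ bound for $e^{itP(D)} f$ and for $e^{itP(D)} P(D) f$, then use Plancherel together with the pointwise inequality $|P(\xi)| \lesssim \lambda^2$ on the frequency support. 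Nothing in this calculation is sensitive to the dimension.

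For the sufficient direction I would perform a Littlewood--Paley decomposition $f = \sum_{k \geq 0} f_k$ with $\supp \hat f_k \subset \{|\xi| \sim 2^k\}$, and bound
\[
\sup_n |e^{it_n P(D)} f - f| \leq \sum_k \sup_n |e^{it_n P(D)} f_k - f_k|.
\]
Fix $k$ and write $\lambda = 2^k$. I would cover the subset $\{t_n\}$ of $(0,1)$ by disjoint intervals $I_j$ of length comparable to $\lambda^{-1}$, apply the maximal estimate on each, and use the weak-$\ell^{r(s)}$ hypothesis on $\{t_n\}$ together with the monotonicity of $t_n - t_{n+1}$ (which forces $t_n - t_{n+1} \lesssim n^{-1 - 1/r(s)}$) to count the intervals effectively hit at scale $\lambda$. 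Balancing the contributions at each $k$ yields a geometric decay $2^{-\varepsilon k} \|f\|_{H^s}$ for some $\varepsilon > 0$, and summing in $k$ together with a standard maximal-function/Borel--Cantelli argument produces the a.e.\ convergence. This scheme is identical to the one used in Section 2 for $N = 2$ once Theorem \ref{theorem1.4} is replaced by its $N$-dimensional version.

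For the necessary direction I would lift the two-dimensional counterexample from Section 3 by a tensor-product construction. After the linear change of variables that takes $\xi_1^2 - \xi_2^2$ into $\xi_1 \xi_2$, the symbol $P$ decomposes as the sum of a function of $(\xi_1, \xi_2)$ alone and a function of $(\xi_3, \ldots, \xi_N)$ alone. If $g \in H^s(\mathbb{R}^2)$ witnesses failure of a.e.\ convergence of $e^{it_n \square} g$ whenever $\{t_n\} \notin \ell^{r(s), \infty}(\mathbb{N})$, I would set $\hat f(\xi) = \hat g(\xi_1, \xi_2)\, \phi(\xi_3, \ldots, \xi_N)$ for a smooth compactly supported bump $\phi$. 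Then $e^{it P(D)} f(x) = e^{it \square} g(x_1, x_2) \cdot h_t(x_3, \ldots, x_N)$, where $h_t$ is smooth in $t$ and bounded away from zero on a small ball for $t$ sufficiently close to $0$. Restricting to that ball turns the divergence set of $g$ into a divergence set of positive measure for $f$, while $\|f\|_{H^s(\mathbb{R}^N)} \lesssim \|g\|_{H^s(\mathbb{R}^2)}$.

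The main obstacle, as in dimension two, is the bookkeeping in the sufficient direction: converting the weak-$\ell^{r(s)}$ hypothesis into the correct count of intervals of length $\lambda^{-1}$ at each dyadic frequency scale $\lambda = 2^k$, and combining this with the monotonicity of $t_n - t_{n+1}$ so that the critical exponent $r(s) = s/(1-s)$ emerges precisely from the interplay between Sobolev regularity and the maximal estimate. No essentially new difficulty appears in passing from $N = 2$ to general $N$, since both the maximal estimate and the tensorization in the counterexample treat the extra variables trivially.
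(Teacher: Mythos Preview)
Your overall plan---extend Theorem~\ref{theorem1.4} to $\mathbb{R}^N$ via the same Sobolev/Plancherel computation, rerun Section~2 verbatim, and tensorize the Section~3 counterexample in the extra variables---is exactly the paper's approach (see the remark following Theorem~\ref{theorem3.1}, where the paper writes down the explicit $\mathbb{R}^3$ test function as a trivial extension of the $\mathbb{R}^2$ one).

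One caution: your paraphrase of the sufficient direction does not match what Section~2 actually does. That proof does \emph{not} cover $\{t_n\}$ by intervals of length $\lambda^{-1}$, does \emph{not} use the monotonicity of $t_n-t_{n+1}$ (that hypothesis enters only in the necessary direction, via Lemma~3.2 and Proposition~3.3 of \cite{DS}), and does \emph{not} obtain a geometric gain $2^{-\varepsilon k}\|f\|_{H^s}$ at each frequency scale; at the endpoint $r(s)=s/(1-s)$ no such $\varepsilon>0$ is available. Instead Section~2 decomposes the time axis into blocks $A_l=\{t_n: 2^{-2(l+1)/(1+r)}<t_n\le 2^{-2l/(1+r)}\}$, splits the double sum over $(k,l)$ into the three regions $k\ge l$, $l/(1+r)\le k<l$, $k<l/(1+r)$, and in each region changes variables (to $m=k-l$ or $j=l-k$) so that an outer geometric series appears while the inner sum reassembles the $H^s$ norm. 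Since you ultimately defer to Section~2 your argument goes through, but the interval-covering scheme you describe on its own would not reach the endpoint.
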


 We prefer to omit the proof of Theorem \ref{theorem1.1} since it is very similar with that of Theorem \ref{theorem1.2}. But a simple explanation for the proof of the necessary condition in Theorem \ref{theorem1.1} will be given in Section 3.

\textbf{Conventions}: Throughout this article, we shall use the well known notation $A\gg B$, which means if there is a sufficiently large constant $G$, which does not depend on the relevant parameters arising in the context in which
the quantities $A$ and $B$ appear, such that $ A\geq GB$. We write $A\sim B$, and mean that $A$ and $B$ are comparable. By
$A\lesssim B$ we mean that $A \le CB $ for some constant $C$ independent of the parameters related to  $A$ and $B$. $B(0,1)$ denotes the unit ball centered at the origin in $\mathbb{R}^{2}$.

\section{Sufficient condition}
By standard arguments, in order to obtain the convergence result, it is sufficient to
show the maximal function estimate in $\mathbb{R}^{2}$. In order to involve the endpoint $r(s)= \frac{s}{1-s}$, we adopt the similar decomposition as Proposition 2.3 in \cite{DS} to prove Theorem \ref{theorem2.1}.
\begin{thm}\label{theorem2.1}
 If $\{t_{n}\}_{n=1}^{\infty} \in {\ell}^{r(s),\infty}(\mathbb{N})$, $r(s)= \frac{s}{1-s}$.
Then for any $0<s <\frac{1}{2}$, we have
\begin{equation}\label{Eq1.7+}
\biggl\|\mathop{sup}_{n \in \mathbb{N}} |e^{it_{n}\square}f|\biggl\|_{L^{2}(B(0,1))} \leq C\|f\|_{H^s(\mathbb{R}^2)},
\end{equation}
whenever $f\in H^s(\mathbb{R}^2)$, where the constant $C$  does not depend on $f$.
\end{thm}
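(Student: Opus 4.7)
\emph{Plan.} I would follow the decomposition strategy of Proposition 2.3 in \cite{DS}, combining a Littlewood--Paley decomposition in frequency with a dyadic decomposition of the time sequence, together with Theorem \ref{theorem1.4}.

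\emph{Step 1 (Frequency localization).} Decompose $f=\sum_\lambda f_\lambda$ with $\widehat{f_\lambda}$ supported in $\{|\xi|\sim\lambda\}$, dyadic $\lambda\geq 1$. The task reduces to establishing the frequency-localized inequality
\[
\bigl\|\sup_n |e^{it_n\square}f_\lambda|\bigr\|_{L^2(B(0,1))}^2\lesssim \lambda^{2s}\|f_\lambda\|_{L^2}^2;
\]
the global $H^s$ bound is then recovered by a Littlewood--Paley summation in $\lambda$ carried out in the spirit of \cite{DS}.

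\emph{Step 2 (Time decomposition).} Partition $\mathcal{T}=\{t_n\}_{n=1}^\infty$ into dyadic pieces $\mathcal{T}_k:=\mathcal{T}\cap[2^{-k-1},2^{-k})$. The Lorentz hypothesis $\{t_n\}\in\ell^{r(s),\infty}(\mathbb{N})$ yields $\#\mathcal{T}_k\lesssim 2^{kr(s)}$.

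\emph{Step 3 (Two competing bounds).} For each nonempty $\mathcal{T}_k$ with $2^k\leq \lambda^2$, I would control $\bigl\|\sup_{t_n\in\mathcal{T}_k}|e^{it_n\square}f_\lambda|\bigr\|_{L^2(B(0,1))}^2$ by the smaller of: (i) the Plancherel estimate $\#\mathcal{T}_k\cdot\|f_\lambda\|_{L^2}^2\lesssim 2^{kr(s)}\|f_\lambda\|_{L^2}^2$, obtained from the elementary inequality $\sup\leq \ell^2$--sum combined with $\|e^{it_n\square}f_\lambda\|_{L^2}=\|f_\lambda\|_{L^2}$; and (ii) the bound $\lambda^2\cdot 2^{-k}\|f_\lambda\|_{L^2}^2$, obtained by covering $\mathcal{T}_k$ with $\lceil 2^{-k}\lambda\rceil$ intervals of length $\lambda^{-1}$ in the range $2^k\leq \lambda$, or (for $\lambda\leq 2^k\leq \lambda^2$) with a single interval of length $2^{-k}\in[\lambda^{-2},\lambda^{-1}]$, and applying Theorem \ref{theorem1.4} to each. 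For the tail $\{t_n:t_n\leq\lambda^{-2}\}$, I would apply Theorem \ref{theorem1.4} a single time to the interval $[0,\lambda^{-2}]$, contributing only $\|f_\lambda\|_{L^2}^2$ in aggregate rather than summing (ii) over the infinitely many small dyadic scales.

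\emph{Step 4 (Exponent balancing).} Bounds (i) and (ii) cross at $2^k\sim\lambda^{2(1-s)}$; at this crossover both equal $\lambda^{2s}\|f_\lambda\|_{L^2}^2$. This matching is precisely the identity $r(s)\cdot 2(1-s)=2s$, equivalently $r(s)=s/(1-s)$. Since both the geometric sum of (i) over $2^k\leq\lambda^{2(1-s)}$ and of (ii) over $\lambda^{2(1-s)}\leq 2^k\leq\lambda^2$ are dominated by their common crossover value, adding the trivial tail contribution yields the frequency-localized estimate.

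\emph{Main obstacle.} The crux is Step 4: the two competing geometric sums must meet exactly at $\lambda^{2s}$, and it is this matching requirement that pins down $r(s)=s/(1-s)$ as the sharp exponent. A subordinate technicality is the Littlewood--Paley summation in $\lambda$ in Step 1, which at the endpoint must be carried out more carefully than a naive triangle inequality --- in order to avoid the usual $\epsilon$-loss of regularity --- mirroring the corresponding step of \cite{DS}.
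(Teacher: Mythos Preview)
Your proposal is correct and follows essentially the same approach as the paper: both combine a Littlewood--Paley decomposition in frequency with a dyadic decomposition of the time sequence, use the Plancherel/counting bound on one side of the crossover $2^k\sim\lambda^{2(1-s)}$ and Theorem \ref{theorem1.4} on the other, and handle the tail $t_n\le\lambda^{-2}$ with a single application of Theorem \ref{theorem1.4}. The only cosmetic difference is that the paper parametrizes the time decomposition by $A_l=\{t_n: t_n\sim 2^{-2l/(1+r)}\}$ (so that the crossover occurs exactly at $k=l$) and organizes the double sum via the changes of variable $k=l+m$ and $k=l-j$, which is precisely the ``careful Littlewood--Paley summation in the spirit of \cite{DS}'' you allude to in your final paragraph.
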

\begin{proof}
Set
\[s=\frac{r}{r+1}, \:\ r \in (0,1).\]
We decompose $f$ as
\[f=\sum_{k=0}^{\infty}{f_{k}},\]
where $\textmd{supp} \hat{f_{0}} \subset B(0,1)$, $\textmd{supp} \hat{f_{k}} \subset \{\xi: |\xi| \sim 2^{k}\}, k \ge 1$.

We decompose $\{t_{n}\}_{n=1}^{\infty}$ as
\[A_{l}:= \biggl\{t_{n}:  2^{-(l+1)\frac{2}{1+r}}<t_{n} \le 2^{-l\frac{2}{1+r}} \biggl\},\:\ l \in \mathbb{N}.\]
Since  $\{t_{n}\}_{n=1}^{\infty} \in {\ell}^{r,\infty}(\mathbb{N})$ and $r \in (0,1)$, we have
\begin{equation}\label{Eq2.5}
\sharp A_{l}  \le C 2^{\frac{2rl}{r+1}}.
\end{equation}
Then we have
\begin{align}
\mathop{sup}_{n \in \mathbb{N}} |e^{it_{n}\square}f|
  &= \mathop{sup}_{l \in \mathbb{N}}\mathop{sup}_{n: t_{n} \in A_{l}} \biggl|\sum_{k=0}^{\infty}e^{it_{n}\square}f_{k} \biggl| \nonumber\\
 &= \mathop{sup}_{l \in \mathbb{N}}\mathop{sup}_{n: t_{n} \in A_{l}} \biggl|\sum_{k \ge l}e^{it_{n}\square}f_{k} \biggl|
 +\mathop{sup}_{l \in \mathbb{N}}\mathop{sup}_{n: t_{n} \in A_{l}} \biggl|\sum_{\frac{l}{1+r} \le k < l}e^{it_{n}\square}f_{k} \biggl|
 \nonumber\\
& \:\:\ +\mathop{sup}_{l \in \mathbb{N}}\mathop{sup}_{n: t_{n} \in A_{l}} \biggl|\sum_{k < \frac{l}{1+r} }e^{it_{n}\square}f_{k} \biggl|
 \nonumber\\
&: =I + II +III.
\end{align}
Next, we will estimate $I,II,III$ respectively.

We firstly estimate $I$. We make the  change of variable $k=l+m$ in $I$. Inequality (\ref{Eq2.5}) and Plancherel theorem  imply  that
\begin{align}\label{Eq2.6}
\|I \|_{L^{2}(B(0,1))} &\le \sum_{m \ge 0} \biggl( \sum_{l \in \mathbb{N}} \sum_{n \in \mathbb{N}: t_{n} \in A_{l}}{ \biggl\|e^{it_{n}\square}f_{l+m}\biggl\|^{2}_{L^{2}(B(0,1))}}\biggl)^{1/2} \nonumber\\
&\le \sum_{m \ge 0} \biggl( \sum_{l \in \mathbb{N}} 2^{\frac{2rl}{r+1}}{ \|f_{l+m} \|^{2}_{L^{2}}}\biggl)^{1/2} \nonumber\\
&= \sum_{m \ge 0} 2^{-\frac{mr}{1+r}} \biggl( \sum_{l \in \mathbb{N}} 2^{\frac{2(l+m)r}{r+1}}{ \|f_{l+m} \|^{2}_{L^{2}}}\biggl)^{1/2} \nonumber\\
&\lesssim \|f\|_{H^{s}(\mathbb{R}^{2})}.
\end{align}

For $II$, we make the change of variable $k=l-j$. Then we have
\begin{align}
II&=\mathop{sup}_{l \in \mathbb{N}}\mathop{sup}_{n: t_{n} \in A_{l}} \biggl|\sum_{0< j \le \frac{rl}{1+r} }e^{it_{n}\square}f_{l-j} \biggl| \nonumber\\
&\le \sum_{j \in \mathbb{N}}\mathop{sup}_{l \in \mathbb{N}: l \ge \frac{r+1}{r}j}\mathop{sup}_{n: t_{n} \in A_{l}} \biggl| e^{it_{n}\square}f_{l-j} \biggl|. \nonumber
\end{align}

Therefore, we have
\begin{align}\label{Eq2.5+}
\|II \|_{L^{2}(B(0,1))} &\le \sum_{j \ge 0} \biggl( \sum_{l \in \mathbb{N}: l \ge \frac{r+1}{r}j} { \biggl\|\mathop{sup}_{n: t_{n} \in A_{l}} | e^{it_{n}\square}f_{l-j} |\biggl\|^{2}_{L^{2}(B(0,1))}}\biggl)^{1/2} \nonumber\\
&\le \sum_{j \ge 0} \biggl( \sum_{l \in \mathbb{N}: l \ge \frac{r+1}{r}j} 2^{2(l-j)} 2^{-\frac{2l}{r+1}}{ \|f_{l-j} \|^{2}_{L^{2}}}\biggl)^{1/2} \nonumber\\
&= \sum_{j \ge 0} 2^{-\frac{j}{1+r}} \biggl( \sum_{l \in \mathbb{N}: l \ge \frac{r+1}{r}j} 2^{\frac{2r(l-j)}{r+1}}{ \|f_{l-j} \|^{2}_{L^{2}}}\biggl)^{1/2} \nonumber\\
&\lesssim \|f\|_{H^{s}(\mathbb{R}^{2})},
\end{align}
where we used Theorem \ref{theorem1.4} to obtain
\begin{equation}
{ \biggl\|\mathop{sup}_{n: t_{n} \in A_{l}} | e^{it_{n}\square}f_{l-j} |\biggl\|_{L^{2}(B(0,1))}} \le 2^{l-j} 2^{-\frac{l}{r+1}}{ \|f_{l-j} \|_{L^{2}}},
\end{equation}
since
\[2^{-2(l-j)} \le 2^{-\frac{2l}{1+r}} \le 2^{-(l-j)} .\]

Finally we estimate $III$. We have
\begin{align}
III&=\mathop{sup}_{l \in \mathbb{N}}\mathop{sup}_{n: t_{n} \in A_{l}} \biggl|\sum_{k < \frac{\l}{1+r} }e^{it_{n}\square}f_{k} \biggl| \nonumber\\
&\le \sum_{k \in \mathbb{N}}\mathop{sup}_{l \in \mathbb{N}: l > (r+1)k}\mathop{sup}_{n: t_{n} \in A_{l}} \biggl| e^{it_{n}\square}f_{k} \biggl|. \nonumber
\end{align}
Notice that when $l \in \mathbb{N}$, $A_{l} \subset (0,2^{-2k})$, then we have
\begin{align}\label{Eq2.7+}
\|III \|_{L^{2}(B(0,1))} &\le \sum_{k \ge 0}  { \biggl\| \mathop{sup}_{l \in \mathbb{N}: l \ge (r+1)k} \mathop{sup}_{n: t_{n} \in A_{l}} | e^{it_{n}\square}f_{k} |\biggl\|_{L^{2}(B(0,1))}}  \nonumber\\
&\le \sum_{k \ge 0}   { \biggl\|\mathop{sup}_{ t \in (0,2^{-2k})} | e^{it\square}f_{k} |\biggl\|_{L^{2}(B(0,1))}}  \nonumber\\
&= \sum_{k \ge 0}   \|f_{k} \|_{L^{2}} \nonumber\\
&\le \|f\|_{H^{s}(\mathbb{R}^{2})}.
\end{align}

Combining  (\ref{Eq2.6}), (\ref{Eq2.5+}) and (\ref{Eq2.7+}),  inequality (\ref{Eq1.7+}) holds true for all $f \in H^{s}$.

\end{proof}

\section{Necessary Condition}
By Nikishin's theorem, the weak type estimate (\ref{Eq3.1+}) can be established from the pointwise convergence result. One can see the appendix in \cite{DS} for more details. Then the necessary condition in Theorem \ref{theorem1.2} can be obtained from the following theorem.
\begin{thm}\label{theorem3.1}
Let $0< s <\frac{1}{2}$, $t_{n} -t_{n+1}$ be decreasing. If the weak type estimate
\begin{equation}\label{Eq3.1+}
\biggl|\biggl\{x \in B(0,1): \mathop{sup}_{n \in \mathbb{N}} |e^{it_{n}\square}f| > \frac{1}{2} \biggl\} \biggl| \le C\|f\|^{2}_{H^{s}}
\end{equation}
holds for all $f \in H^{s}(\mathbb{R}^{2})$, then $\{t_{n}\}_{n=1}^{\infty} \in \ell^{r(s), \infty}(\mathbb{N})$, $r(s)= \frac{s}{1-s}$.
\end{thm}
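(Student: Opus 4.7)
The plan is to argue by contradiction. Assume (\ref{Eq3.1+}) holds for every $f \in H^s(\mathbb{R}^2)$ but $\{t_n\}_{n=1}^\infty \notin \ell^{r(s),\infty}(\mathbb{N})$. Then $\sup_{b > 0} b^{r(s)}\,\#\{n : t_n > b\} = \infty$, so one can extract $b_k \downarrow 0$ with $A_k := b_k^{r(s)}\,N(b_k) \to \infty$, where $N(b) = \#\{n : t_n > b\}$. For each $k$ I will exhibit a test function $f_k \in H^s(\mathbb{R}^2)$ for which the ratio
\[
\bigl|\{x \in B(0,1) : \sup_n |e^{it_n\square}f_k(x)| > 1/2\}\bigr| / \|f_k\|_{H^s}^2
\]
is at least of order $A_k$; letting $k\to\infty$ contradicts (\ref{Eq3.1+}).

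The counterexample is a Knapp-type wave packet: take $\hat{f_k} = \chi_Q$ for a rectangle $Q = \xi_0 + [-\delta_1/2, \delta_1/2] \times [-\delta_2/2, \delta_2/2]$. Substituting $\xi = \xi_0 + \eta$ yields
\[
e^{it\square}f_k(x) = e^{i\phi_0(x,t)} \int_{[-\delta_1/2, \delta_1/2] \times [-\delta_2/2, \delta_2/2]} e^{i(x + t\xi_0^\perp)\cdot\eta + it\eta_1\eta_2}\, d\eta,
\]
with $\xi_0^\perp = (\xi_0^2, \xi_0^1)$. A stationary/non-stationary phase dichotomy based on the size of $t\delta_1\delta_2$ shows that $|e^{it\square}f_k|$ concentrates on a rectangular ``blob'' centred at $-t\xi_0^\perp$: with dimensions $\delta_1^{-1} \times \delta_2^{-1}$ and amplitude $\sim \delta_1\delta_2$ when $t\delta_1\delta_2 \ll 1$, and with dimensions $t\delta_2 \times t\delta_1$ and amplitude $\sim 1/t$ when $t\delta_1\delta_2 \gg 1$.

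The decreasing-differences hypothesis on $\{t_n\}$ implies that a positive fraction of the $M := N(b)$ indices with $t_n > b$ lies in a sub-block $J \subset (b, 1)$ whose minimum consecutive gap is $\lesssim b/M$. The parameters $\xi_0, \delta_1, \delta_2$ are then calibrated, as suitable powers of $b$ and $M$, so that every blob $\{-t\xi_0^\perp : t \in J\}$ lies inside $B(0,1)$ (forcing $|\xi_0| \lesssim 1/b$); the blobs associated to distinct $t_n \in J$ are essentially disjoint (translating into a lower bound $\lambda\delta \gtrsim M/b$); and the Knapp box dimensions place the wave packet in the stationary-phase regime that matches the sharp endpoint $|I| = \lambda^{-2(1-s)}$ of Theorem \ref{theorem1.4}. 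After normalising $f_k$ so that the blob amplitude becomes $\sim 1$, the measure of the maximal set is $\sim M \cdot (\text{blob area})$, and the ratio to $\|f_k\|_{H^s}^2$ comes out of order $A_k$, giving the contradiction.

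The main technical obstacle is pinning down the precise calibration that realises the exponent $r(s) = s/(1-s)$. A naive single-scale choice with $|\xi_0| \sim 1/b$ only yields $N(b) \lesssim b^{-2s}$, which is strictly weaker than the claimed $b^{-s/(1-s)}$ for $s < 1/2$; to match the sharp exponent, $|\xi_0|$ must be selected as a specific power of both $b$ and $M$, so that the wave packet sits exactly at the endpoint $|I| = \lambda^{-2(1-s)}$ where Theorem \ref{theorem1.4} is tight (the same place where case II of the sufficient direction of Theorem \ref{theorem2.1} is tight). The decreasing-differences hypothesis is crucial in that it supplies the minimum-spacing estimate $\lesssim b/M$ that makes the blob disjointness achievable. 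For Theorem \ref{theorem1.1} in dimensions $N \ge 2$, the same construction adapts by placing the remaining $N - 2$ frequency variables in an auxiliary Knapp slab of unit width; the phase $P(\xi) = \xi_1^2 - \xi_2^2 \pm \xi_3^2 \pm \cdots \pm \xi_N^2$ decouples so that the extra variables contribute only multiplicative constants, leaving the exponent calculation unchanged, which is why only a brief explanation is given.
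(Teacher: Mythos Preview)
Your general strategy (Knapp-type wave packet, contradiction with the weak-type bound) is the right one, and it is also the paper's. But the proposal stops precisely where the proof actually begins: you identify ``pinning down the precise calibration'' as the main obstacle and then do not carry it out. The paper's choice is completely explicit and is the whole point: with $r=r(s)$, one takes $\lambda_j=\tfrac{1}{1000}M_j^{1/2}b_j^{-(r+1)/2}$ and $\widehat{f_j}=\lambda_j^{-1}\chi_{[0,\lambda_j]\times[-\lambda_j-1,-\lambda_j]}$, so that $\|f_j\|_{H^s}\sim\lambda_j^{s-1/2}$. With $U_j=(0,\tfrac12\lambda_j b_j)\times(-\tfrac{1}{1000},\tfrac{1}{1000})$ one shows $|U_j|\le C\|f_j\|_{H^s}^2$ forces $M_j^{1-s}\le C$, which is the contradiction.

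Two further points where the sketch drifts. First, the geometry is backwards: the decreasing-difference hypothesis gives an \emph{upper} bound on consecutive gaps (this is Proposition~3.3 in Dimou--Seeger, yielding $t_n-t_{n+1}\le 2M_j^{-1}b_j^{r+1}$ for $t_n\le b_j$), so the blobs are not ``essentially disjoint'' --- they are closely enough spaced that for every $x_1\in(0,\tfrac12\lambda_j b_j)$ there is an $n(x,j)$ with $|x_1-\lambda_j t_{n(x,j)}|\le\lambda_j(t_n-t_{n+1})\ll\lambda_j^{-1}$, and the phase in (\ref{Eq3.6}) is then uniformly small. The lower bound on the level set is therefore $|U_j|\sim\lambda_j b_j$ (a covered rectangle), not $M\times(\text{blob area})$; no stationary phase is used, only that $|e^{i\theta}-1|$ is small. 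Second, your extraction ``$b_k\downarrow0$ with $b_k^{r}N(b_k)\to\infty$'' is not enough: one also needs the side constraint $M_j b_j^{1-r}\le 1$ (so that $\lambda_j b_j\le 1$ and $U_j\subset B(0,1)$), and this is exactly what Lemma~3.2 of Dimou--Seeger supplies. Your higher-dimensional remark, however, matches the paper's: add dummy unit-width variables in frequency.
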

\begin{proof}
Since
\[1-r(s) = \frac{1-2s}{1-s} >0,\]
according to  Lemma 3.2 in \cite{DS}, if $\{t_{n}\}_{n=1}^{\infty} \notin \ell^{r(s), \infty}(\mathbb{N})$, then we can choose $\{b_{j}\}_{j=1}^{\infty}$ and $\{M_{j}\}_{j=1}^{\infty}$ satisfying
\[ \lim_{j \rightarrow \infty}b_{j} = 0, \:\ \lim_{j \rightarrow \infty}M_{j} = \infty, \]
and
\begin{equation}\label{Eq3.2}
M_{j}b_{j}^{1-r(s)} \le 1,
\end{equation}
such that
\begin{equation}\label{Eq3.2+}
\sharp\biggl\{n: b_{j} < t_{n} \le 2b_{j}\biggl\} \ge M_{j}b_{j}^{-r(s)}.
\end{equation}
By the similar argument as in \cite{DS}, Proposition 3.3, when $t_{n} \le b_{j}$, we have
\begin{equation}\label{Eq3.3}
t_{n}-t_{n+1} \le 2M_{j}^{-1}b_{j}^{r(s)+1}.
\end{equation}

For fixed $j$, choose
\[\lambda_{j}=\frac{1}{1000} M_{j}^{\frac{1}{2}}b_{j}^{-\frac{r(s)+1}{2}},\]
\[\widehat{f_{j}}(\xi_{1},\xi_{2}) =\frac{1}{\lambda_{j}} \chi_{[0,\lambda_{j}]\times[{-\lambda_{j}-1,-\lambda_{j}}]}(\xi_{1},\xi_{2}).\]
Therefore,
\begin{equation}\label{Eq3.4}
\|f_{j}\|_{H^{s}}=\lambda_{j}^{s-\frac{1}{2}}.
\end{equation}
Set
\[U_{j}=(0,\frac{\lambda_{j}b_{j}}{2}) \times (-\frac{1}{1000}, \frac{1}{1000}).\]
Notice that $U_{j} \subset B(0,1)$ due to inequality (\ref{Eq3.2}). We will show that for each $x \in U_{j}$,
\begin{equation}\label{Eq3.5}
\mathop{sup}_{n \in \mathbb{N}} |e^{it_{n}\square}f_{j}| > \frac{1}{2}.
\end{equation}

Indeed, changing of variables shows that for each $n \in \mathbb{N}$,
\begin{align}\label{Eq3.6}
|e^{it_{n}\square}f_{j}(x)|&=\biggl|\frac{1}{\lambda_{j}}\int_{-\lambda_{j}-1}^{-\lambda_{j}}\int_{0}^{\lambda_{j}}{e^{ix_{1}\xi_{1}+ix_{2}\xi_{2}+it_{n}\xi_{1}\xi_{2}}d\xi_{1}d\xi_{2}}\biggl| \nonumber\\
&=\biggl|\int_{-1}^{0}\int_{0}^{1}{e^{i\lambda_{j}(x_{1}-\lambda_{j}t_{n})\eta_{1}+ix_{2}\eta_{2}+it_{n}\lambda_{j}\eta_{1}\eta_{2}}d\eta_{1}d\eta_{2}}\biggl|.
\end{align}
For each $x \in U_{j}$, there exists a unique $n(x,j)$ such that
\[x_{1} \in (\lambda_{j}t_{n(x,j)+1}, \lambda_{j}t_{n(x,j)}].\]
It is obvious that $t_{n(x,j)} \le b_{j}$ due to $t_{n(x,j)+1} \le \frac{b_{j}}{2}$, inequality (\ref{Eq3.2+}) and the assumption that $ t_{n} -t_{n+1}$ is decreasing. Then it follows from inequality (\ref{Eq3.3}) that
\begin{equation}
|\lambda_{j}(x_{1}-\lambda_{j}t_{n(x,j)})\eta_{1}| \le 2\lambda_{j}^{2} M^{-1}_{j}b_{j}^{r(s)+1} \le \frac{1}{1000}.
\end{equation}
Also,
\begin{equation}
| x_{2}\eta_{2}|  \le \frac{1}{1000},
\end{equation}
and by inequality (\ref{Eq3.2}), we have
\begin{equation}
|\lambda_{j}t_{n(x,j)}\eta_{1}\eta_{2}|  \le \lambda_{j}b_{j}  \le \frac{1}{1000}.
\end{equation}
Therefore, if we take $n=n(x,j)$ in inequality (\ref{Eq3.6}), then the phase function will be sufficiently small such
that
\[|e^{it_{n(x,j)}\square}f_{j}(x)| > \frac{1}{2}\]
for each $x \in U_{j}$, which implies inequality (\ref{Eq3.5}).

By the weak type estimate  we have
\[\biggl|\biggl\{x \in U_{j}: \mathop{sup}_{n \in \mathbb{N}} |e^{it_{n}\square}f_{j}| > \frac{1}{2} \biggl\} \biggl| \le C\|f_{j}\|^{2}_{H^{s}}.\]
Then it follows from inequality (\ref{Eq3.4}) and inequality (\ref{Eq3.5}) that
\[1 \le C M_{j}^{s-1}.\]
This is not true when $j$ is sufficiently large.
\end{proof}

\begin{rem}
The original idea we adopted to construct the counterexample in the proof of Theorem \ref{theorem3.1} comes from \cite{RVV}. The same idea remains valid in general dimensions. For example, in $\mathbb{R}^{3}$, by changing variables, we can write
\[e^{itP(D)}f(x):= \int_{\mathbb{R}^{3}}{e^{ix \cdot \xi +it(\xi_{1}\xi_{2}\pm \xi_{3}^{2})} \hat{f} (\xi)d\xi }.\]
In order to prove the necessary condition, we only need to take
\[U_{j}=(0,\frac{\lambda_{j}b_{j}}{2}) \times (-\frac{1}{1000}, \frac{1}{1000}) \times (-\frac{1}{1000}, \frac{1}{1000})\]
and
\[\widehat{f_{j}}(\xi_{1},\xi_{2}, \xi_{3}) =\frac{1}{\lambda_{j}} \chi_{[0,\lambda_{j}]\times[{-\lambda_{j}-1,-\lambda_{j}}] \times (0,1)}(\xi_{1},\xi_{2}, \xi_{3}).\]
\end{rem}


\begin{flushleft}
\vspace{0.3cm}\textsc{Wenjuan Li\\School of Mathematics and Statistics\\Northwest Polytechnical University\\710129\\Xi'an, People's Republic of China}

\vspace{0.3cm}\textsc{Huiju Wang\\School of Mathematics Sciences\\University of Chinese Academy of Sciences\\100049\\Beijing, People's Republic of China}

\vspace{0.3cm}\textsc{Dunyan Yan\\School of Mathematics Sciences\\University of Chinese Academy of Sciences\\100049\\Beijing, People's Republic of China}

\end{flushleft}


\begin{thebibliography}{HD}



\normalsize
\baselineskip=17pt

\bibitem{B1} J. Bourgain. Some new estimates on oscillatory integrals. In Essays on Fourier Analysis in Honor of Elias M. Stein (Princeton, NJ, 1991), Princeton Math. Ser. 42, Princeton Univ. Press, Princeton, NJ, 1995: 83-112.

\bibitem{B2} J. Bourgain. On the Schr\"{o}dinger maximal function in higher dimension. Proceedings of the Steklov Institute of Mathematics, 2012, 280(1): 53-66.

\bibitem{B} J. Bourgain. A note on the Schr\"{o}dinger maximal function. Journal d'Analyse Math\'{e}matique, 2016, 130: 393-396.


\bibitem{C} L. Carleson. Some analytic problems related to statistical mechanics. Euclidean harmonic analysis. Springer, Berlin, Heidelberg, 1980: 5-45.

\bibitem{Carbery} A. Carbery. Radial Fourier multipliers and associated maximal functions, in ''Recent
Progress in Fourier Analysis''(I. Peral and J. L. Rubio de Francia, Eds.),
North Holland, Amsterdam, 1985: 49-56.

\bibitem{CK} C. Cho, H. Ko. Note on maximal estimates of generalized Schr\"{o}dinger equation. arXiv preprint, arXiv:1809.03246v2, 2019.

\bibitem{CLV} C. Cho, S. Lee, A. Vargas. Problems on pointwise convergence of solutions to the Schr\"{o}dinger equation. Journal of Fourier Analysis and Applications, 2012, 18(5): 972-994.

\bibitem{DS} E. Dimou, A. Seeger. On pointwise convergence of Schr\"{o}dinger means.  Arxiv Preprint, Arxiv: 1906.03727v3, 2019.

\bibitem{DK} B. E. J. Dahlberg, C. E. Kenig. A note on the almost everywhere behavior of solutions to the Schr\"{o}dinger equation, in Harmonic Analysis (Minneapolis, Minn., 1981), Lecture Notes in Math. 908, Springer-Verlag, New York, 1982: 205-209.


\bibitem{DGL} X. Du, L. Guth, X. Li. A sharp Schr\"{o}dinger maximal estimate in $\mathbb{R}^{2}$. Annals of Mathematics, 2017, 186: 607-640.

\bibitem{DKWZ} X. Du, J. Kim, H. Wang, R. Zhang. Lower bounds for estimates of  Schr\"{o}dinger maximal function. Arxiv Preprint, Arxiv: 1902.01430v1, 2019.


\bibitem{DZ} X. Du, R. Zhang. Sharp $L^{2}$ estimates of the Schr\"{o}dinger maximal function in higher dimensions. Annals of Mathematics, 2019, 189: 837-861.


\bibitem{KPV} E. C. Kenig, G. Ponce, L. Vega. Oscillatory integrals and regularity of dispersive equations. Indiana University Mathematics Journal, 1991, 40(1): 33-69.

\bibitem{L} S. Lee. On pointwise convergence of the solutions to Schr\"{o}dinger equations in $\mathbb{R}^{2}$. International Mathematics Research Notices, 2006, 32597: 1-21.

\bibitem{LR} S. Lee, K. M. Rogers. The Schr\"{o}dinger equation along curves and the quantum harmonic oscillator. Advances in Mathematics, 2012, 229: 1359-1379.

\bibitem{LW} W. Li, H. Wang, D. Yan. Pointwise convergence for sequences of Schr\"{o}dinger means in $\mathbb{R}^{2}$. Arxiv Preprint, Arxiv: 2010.08701, 2020.


\bibitem{LR1} R. Luc\'{a}, K. M. Rogers. An improved necessary condition for the Schr\"{o}dinger maximal estimate. Arxiv Preprint, Arxiv:1506.05325v1, 2015.

\bibitem{Miao} C. Miao, J. Yang, J. Zheng. An improved maximal inequality for 2D fractional order Schr\"{o}dinger operators. Studia Mathematica, 2015, 230: 121-165.

\bibitem{MVV} A. Moyua, A. Vargas, L. Vega. Schr\"{o}dinger maximal function and restricion properties of the Fourier transform. International Mathematics Research Notices, 1996, 16: 793-815.

\bibitem{RVV} M. K. Rogers, A. Vargas, L. Vega. Pointwise convergence of solutions to the nonelliptic Schr\"{o}dinger equation. Indiana University Mathematics Journal, 2006: 1893-1906.


\bibitem{SS} P. Sj\"{o}gren, P. Sj\"{o}lin. Convergence properties for the time-dependent Schr\"{o}dinger equation. Annales Academire Scientiarurn Fennicre, Series A. I. Mathematica, 1987, 14: 13-25.

\bibitem{S} P. Sj\"{o}lin. Regularity of solutions to the Schr\"{o}dinger equation. Duke Mathematical Journal, 1987, 55(3): 699-715.

\bibitem{S1} P. Sj\"{o}lin. Two theorems on convergence of Schr\"{o}dinger means. Journal of Fourier Analysis and Applications, 2018, https://doi.org/10.1007/s00041-018-9644-0.

\bibitem{SS1} P. Sj\"{o}lin. J. Str\"{o}mberg. Convergence of sequences of Schr\"{o}dinger means. Journal of Mathematical Analysis and Applications, 2020, https://doi.org/10.1016/j.jmaa.2019.123580.

\bibitem{SPS} C. Sulem, P. L. Sulem. The nonlinear Schr\"{o}dinger equation. Appl. Math. Sci, vol. 139, Springer-Verlag, New York, 1999, xvi+350pp.


\bibitem{V} L. Vega. Schr\"{o}dinger equations: pointwise convergence to the initial data. Proceedings of the American Mathematical Society, 1988, 102(4): 874-878.



\end{thebibliography}
\end{document}